\documentclass[reqno]{amsart}

\usepackage[dvipsnames]{xcolor}
\usepackage{amsmath}
\usepackage{enumerate}
\usepackage{enumitem}
\usepackage{amssymb}
\usepackage{extarrows}
\usepackage{amsthm}
\usepackage{thmtools}
\usepackage{mathrsfs}
\usepackage[colorlinks, linkcolor = SeaGreen, citecolor = SeaGreen]{hyperref}
\usepackage{bm}
\hypersetup{colorlinks = true}

\newtheorem{thm}{Theorem}[section]

\newtheorem{lem}[thm]{Lemma}

\newtheorem{ques}[thm]{Question}
\theoremstyle{definition}

\theoremstyle{remark}
\newtheorem{remark}[thm]{Remark}

\newcommand{\D}{\mathbb D}

\newcommand{\R}{\mathbb{R}}

\newcommand{\Hol}{\mathrm{Hol}}

\title[Commuting analytic and co-analytic Toeplitz operators on $L_h^2$]
{Commuting analytic and co-analytic Toeplitz operators
on the harmonic Bergman space}

\author[Tian]{Xianchi Tian}
\address{College of Mathematics and Statistics, Chongqing University, Chongqing, 401331, P. R. China}
\email{202306021073t@stu.cqu.edu.cn}

\author[Wang]{Jiawei Wang}
\address{School of Mathematical Sciences, Inner Mongolia University, Hohhot, 010021, P. R. China}
\email{jiaweiwang1516@163.com}

\author[Zhao]{Xianfeng Zhao}
\address{College of Mathematics and Statistics, Chongqing University, Chongqing, 401331, P. R. China}
\email{xianfengzhao@cqu.edu.cn}

\keywords{Toeplitz operator; commutativity; harmonic Bergman space}

\makeatletter

\newcommand{\Rmnum}[1]{\expandafter\@slowromancap\romannumeral #1@}
\@namedef{subjclassname@2020}{\textup{2020} Mathematics Subject Classification}
\makeatother
\subjclass[2020]{47B35}

\begin{document}

\begin{abstract}
By constructing an appropriate quadratic form, we prove that if an analytic Toeplitz operator commutes with a co-analytic Toeplitz operator on the harmonic Bergman space, then at least one of the symbols is constant. This resolves an open question posed by Choe and Lee in 1999.
\end{abstract}

\maketitle
\section{Introduction}\label{S1}
Let $dA(z)=\frac{1}{\pi}dxdy$ denote the normalized Lebesgue area measure on the open unit disk $\mathbb D$ in the complex plane $\mathbb C$. $L^2(\mathbb D, dA)$ is the
Hilbert space of Lebesgue square integrable  functions on $\mathbb D$ with the inner product
$$\langle f, g\rangle=\int_{\mathbb D}f(z)\overline{g(z)}dA(z).$$
The Bergman space $L_a^2$ is the closed subspace of $L^2(\mathbb D, dA)$ consisting of analytic functions on $\mathbb D$. It is well known that $L_a^2$ is a  reproducing kernel Hilbert space. The harmonic Bergman space $L_h^2$ is the closed subspace of $L^2(\mathbb D, dA)$ consisting of all complex-valued harmonic functions on $\mathbb D$. It is easy to verify  the following decomposition for $L_h^2$:
$$L_h^2=zL_a^2\oplus\overline{L_a^2},$$
where $\overline{L_a^2}$ denotes  the complex conjugate of $L_a^2$.
Therefore, \[\big\{\sqrt{n+1}z^n: n=1,2,\cdots\big\}
\cup\big\{\sqrt{n+1}\overline z^{n}:n=0, 1, 2,\cdots\big\}
\]
is an orthonormal basis for $L_h^2$. Moreover, the harmonic Bergman space is also a reproducing kernel Hilbert space and the reproducing kernel is given by
 $$R_z(w)=\frac{1}{(1-\overline{z}w)^2}+\frac{1}{(1-z\overline{w})^2}-1, \ \ \ \  w\in \mathbb D.$$

Let $P$ and $Q$ be the orthogonal projections from $L^2(\mathbb D, dA)$ onto $L_a^2$ and $L_h^2$, respectively. It was shown in
\cite{CL} that $P$ and $Q$ have the following relationship:
\begin{align*}\label{P and Q}
Q(f)=P(f)+\overline{P(\overline{f})}-P(f)(0), \ \ \ \ \ \ \  f\in L^2(\mathbb D, dA).
\end{align*}

 For $\varphi\in L^2(\mathbb D, dA)$, the Toeplitz operator $T_\varphi$ with symbol $\varphi$ on  the harmonic Bergman space $L_h^2$ is densely defined by
$$T_\varphi f=Q(\varphi f)$$
for harmonic polynomials $f$. In particular, a Toeplitz operator is called analytic  (co-analytic) if its symbol $\varphi$ is analytic  (conjugate analytic)  on $\mathbb D$. For more details concerning Toeplitz operators on the Bergman and harmonic Bergman spaces, one can consult \cite{M}, \cite{GZ}, \cite{CL2}, \cite{Zhu} and \cite{LL}.

Some properties of Toeplitz operators on the harmonic Bergman space are analogous to those on the Bergman space. For example, in the case of bounded harmonic symbols, there exist no nonzero compact Toeplitz operators; and in the case of continuous symbols on the closed disk, the essential spectrum of the Toeplitz operator equals the image of the unit circle under the symbol (see \cite{GZ}). Nevertheless, since the properties of the harmonic Bergman space differ from those of the Bergman space, the Toeplitz operators on these spaces also exhibit significant differences. It is well-known that the spectrum of any analytic Toeplitz operator on the Bergman space is connected. However, there exists  a quadratic polynomial such that  the corresponding Toeplitz operator on the harmonic
Bergman space has a disconnected spectrum (see \cite{WZ}). Furthermore, the commutativity of two Toeplitz operators with bounded harmonic symbols on the Bergman space can be completely characterized by the conformally invariant mean value property of harmonic functions (\cite{AC}); however, this technique is not available in the setting of  the harmonic Bergman space.

The commutativity problem for Toeplitz operators on the harmonic Bergman space $L_h^2$ was first investigated by Ohno \cite{O}, who observed that, for an analytic symbol $f$, $T_f$ commutes with $T_z$ if and only if $f$ is a polynomial of degree at most $1$. Motivated by this work, Choe and Lee \cite{CL} considered the commutativity of densely defined Toeplitz operators on $L_h^2$ and established that (i) two  Toeplitz operators with symbols in $L_a^2$ commute on the harmonic Bergman space if and only if a nontrivial linear combination of their symbols is constant; and (ii) a Toeplitz operator with symbol in $L_h^2$ commutes with a Toeplitz operator having a harmonic polynomial symbol on the harmonic Bergman space if and only if a nontrivial linear combination of those two symbols is constant. One may naturally ask whether the conclusion in (ii)
is true for general harmonic symbols. As pointed out in  \cite{CL}, this problem can be reduced to the following special
case:
\begin{ques}\label{Q}
\emph{(Choe-Lee, 1999)} Let $f\in L_a^2$ and $g\in L_a^2$. Does it
follow that either $f$ or $g$ is constant if $T_{f}$ and $T_{\overline{g}}$
commute on the harmonic Bergman space $L_h^2$?
\end{ques}

Five years later, Choe and Lee  \cite{CL1} proved that the answer to the above question is affirmative whenever $f$ and $g$ are bounded analytic on $\mathbb D$ (i.e., $f, g\in H^\infty$)  and one of them is noncyclic for the multiplication operator $M_{\overline{z}}$ on the Bergman space.  Following some ideas in \cite{CL} and \cite{CL1}, Ding (\cite{D}) asserted that if $f$ and $g$ are in the Bergman space and one of them is bounded, then $T_f$ and $T_{\overline{g}}$ commute on $L_h^2$ if and only if either $f$ or $g$ is constant.  However, his proof appears to contain some gaps, as it relies on a misapplication of the Axler-Shields theorem. Therefore, Question \ref{Q} remains open even when both symbols are bounded.

For further results concerning the commutativity of analytic and co-analytic Toeplitz operators on the harmonic Bergman space, we refer to \cite{KQT}, where several conclusions from \cite{CL} were extended to the standard weighted harmonic Bergman space. The same commutativity problem has also been investigated for other classes of symbols. In particular, using the Mellin transform, the authors of \cite{DZ} and \cite{LRZ} studied Toeplitz operators with certain quasi-homogeneous symbols on the harmonic Bergman space. More recently, Iqtaish, Louhichi, and Yousef \cite{ILY} obtained a complete characterization of the bounded Toeplitz operators $T_f$
on the harmonic Bergman space that commute with $T_{z+\overline{g}}$, under the assumption that the symbol $f$ has a polar decomposition truncated above and that $g\in H^\infty$.

Motivated by the work of Choe and Lee \cite{CL,CL1} and Ding \cite{D}, this paper is devoted to a further study of the commutativity problem for an analytic Toeplitz  operator and a co-analytic Toeplitz operator on the harmonic Bergman space.  By constructing an appropriate quadratic form, we give an affirmative answer to Question \ref{Q} using entirely elementary techniques. More precisely, we will prove the following theorem in Section \ref{S3}:
\begin{thm}\label{thm:main}
Suppose that  $f$ and $g$ are in $L_a^2$. Then
\[
T_fT_{\overline g}=T_{\overline g}T_f
\]
on the harmonic Bergman space $L_h^2$ if and only if either $f$ or $g$ is constant.
\end{thm}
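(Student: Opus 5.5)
The "if" direction is immediate: if $f$ or $g$ is constant, one of the two operators is a scalar multiple of the identity. For the converse, I would write $f=\sum_{n\ge0}a_nz^n$ and $g=\sum_{n\ge0}b_nz^n$. Subtracting constants changes nothing, so I may assume $a_0=b_0=0$. The goal is then to show that $T_fT_{\overline g}=T_{\overline g}T_f$ forces $a_n\overline{b_m}=0$ for all $n,m\ge1$, which gives $f\equiv0$ or $g\equiv0$.

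The first step is to compute the matrix entries
\[
\langle (T_fT_{\overline g}-T_{\overline g}T_f)e,e'\rangle
\]
for basis vectors $e,e'\in\{z^k\}_{k\ge1}\cup\{\overline z^k\}_{k\ge0}$. For this I use the relation $Q(h)=P(h)+\overline{P(\overline h)}-P(h)(0)$ together with the monomial formula
\[
P(z^m\overline z^k)=\frac{m-k+1}{m+1}\,z^{m-k}\quad (m\ge k),\qquad P(z^m\overline z^k)=0\quad (m<k).
\]
On $\overline{L_a^2}$ the operator $T_{\overline g}$ acts as multiplication, and on $zL_a^2$ the operator $T_f$ acts as multiplication. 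Hence the commutator is concentrated in the "cross" terms. These are the terms where $T_f$ acting on $\overline z^k$ has to project $f\overline z^k$ back into $L_h^2$, and symmetrically for $T_{\overline g}$ acting on $z^k$, together with the correction coming from the constant $-P(h)(0)$. Writing these out, each matrix entry becomes a finite bilinear expression
\[
\sum_{n,m}\psi_{j,k}(n,m)\,a_n\overline{b_m},
\]
with explicit rational weights $\psi_{j,k}(n,m)$ supported on $n-m=\pm(j-k)$ or $n+m=j+k$, depending on the types of $e$ and $e'$. All sums are finite because only finitely many $n,m$ contribute to a given entry, so no boundedness of $f$ or $g$ is needed. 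Setting all these entries equal to zero gives an infinite linear system in the products $a_n\overline{b_m}$.

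The core step is to combine these scalar identities into a quadratic form that is manifestly definite. Concretely, I would fix a level $N$ and take the entries $\langle[T_f,T_{\overline g}]\overline z^k,z^j\rangle$ with $j+k=N$ (and the analogous entries with roles swapped). I would multiply them by suitable conjugated coefficients such as $\overline{a_j}\,b_k$, or weights depending on $j,k$, and sum. The aim is to reach an identity of the form
\[
\sum_{n,m}c_{n,m}\,|a_n|^2|b_m|^2=0\qquad\text{with all }c_{n,m}>0 \text{ for } n,m\ge1,
\]
or at least a sum of squares $\sum_\ell\bigl|\sum w_{\ell,n,m}a_n\overline{b_m}\bigr|^2$ containing a strictly positive diagonal term. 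Either form forces $a_n\overline{b_m}=0$ for all $n,m\ge1$.

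If a global form is hard to find, my fallback is an induction on lowest degrees. Let $p$ and $q$ be the smallest indices with $a_p\ne0$ and $b_q\ne0$. The lowest-order relation, with $j+k$ minimal, should read $\psi\,a_p\overline{b_q}=0$. If the explicit weight $\psi$ is nonzero, this already gives a contradiction. The danger is that it vanishes for particular $(p,q)$, and then the next-order relations must be brought in. That is exactly where the quadratic-form combination is needed.

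I expect the main obstacle to be choosing the weights in that combination so that the off-diagonal terms $a_n\overline{b_m}\,\overline{a_{n'}}b_{m'}$ cancel or are dominated. The coefficients $\frac{m-k+1}{m+1}$ from the projection do not telescope obviously. I would first work out the cases $f=z^p$ and $g=z^q$ by hand to find the right test vectors and multipliers, and then check that the same weights work in general.
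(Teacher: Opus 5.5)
Your outline has a genuine gap. The step that carries the whole proof, producing a definite quadratic form, is left open, and the concrete choices you make around it would not work.

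First, the matrix entries are not finite sums. Every entry of $[T_f,T_{\overline g}]$ involves the products $a_p\overline{b_q}$ along a whole line $p-q=\mathrm{const}$. For $\langle[T_f,T_{\overline g}]\overline z^k,z^j\rangle$ the line is $p-q=j+k$, not $p+q=j+k$. For example, the $z^{n+s}$-coefficient of $T_fT_{\overline g}z^n$ contains the infinite tail $\sum_{k>n}\frac{(n+s+1)(k-n+1)}{(k+1)(k+s+1)}a_{k+s}\overline{b_k}$, which comes from the anti-analytic part of $T_{\overline g}z^n$. So convergence and interchange of summation must be justified from $f,g\in L_a^2$ alone. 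Your fallback induction on lowest degrees also collapses. After cancellation the relations have the form $\sum_{k\ge1}K_s(n,k)\,a_{k+s}\overline{b_k}=0$ with every weight $K_s(n,k)>0$ (see below). The terms above the lowest index never drop out, so no relation isolates $a_p\overline{b_q}$.

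Second, the family you single out for the core step carries no information at all. By the monomial formula for $Q$, both $T_fT_{\overline g}\overline z^k=Q(f\overline g\,\overline z^k)$ and $T_{\overline g}T_f\overline z^k$ have $z^j$-coefficient
\[
\sum_{q\ge1}\frac{j+1}{q+j+k+1}\,a_{q+j+k}\overline{b_q}.
\]
Hence $\langle[T_f,T_{\overline g}]\overline z^k,z^j\rangle=0$ for all $f,g$. Conceptually, $M_{\overline g}$ maps $(L_a^2)^\perp$ into itself, so $P(\overline g\,P\varphi)=P(\overline g\varphi)$. The same vanishing holds for $\langle[T_f,T_{\overline g}]z^k,\overline z^j\rangle$. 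All the information sits in the blocks $z^n\mapsto z^{n+s}$ and $\overline z^n\mapsto\overline z^{n+s}$. Multiplying vacuous identities by $\overline{a_j}b_k$, which in any case does not match the index structure, yields nothing.

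What is missing is the actual construction, which is the heart of the paper's proof. Fix $s\ge0$ and put $c_k=a_{k+s}\overline{b_k}$, $v_k=\frac{1}{(k+1)(k+s+1)}$ and $q_k=k(k+s)(k+1)(k+s+1)$. Comparing $z^{n+s}$-coefficients of $T_fT_{\overline g}z^n$ and $T_{\overline g}T_fz^n$ gives, for every $n\ge1$, $\sum_{k\ge1}K_s(n,k)c_k=0$. Here $K_s(n,k)=q_{\min\{n,k\}}v_nv_k/(n+k+s+1)$ is symmetric. The paper pairs this with $\overline{c_n}/q_n$. The weight $1/q_n$ gives $|K_s(n,k)/q_n|\le v_nv_k$, and $(v_kc_k)\in\ell^1$ by Cauchy--Schwarz, so the double series converges absolutely. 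Symmetrizing with $K_s(n,k)=K_s(k,n)$ then yields
\[
\sum_{n,k\ge1}\frac{v_nv_k}{n+k+s+1}\Big(1+\frac{q_{\min\{n,k\}}}{q_{\max\{n,k\}}}\Big)c_k\overline{c_n}=0 .
\]
This kernel is positive semidefinite. Indeed, $\frac{1}{n+k+s+1}=\int_0^1r^{n+k+s}\,dr$ is a Gram kernel. Also $\frac{q_{\min}}{q_{\max}}=e^{-|\log q_n-\log q_k|}$ is positive definite via $e^{-|t|}=\frac1\pi\int_{\R}\frac{e^{i\lambda t}}{1+\lambda^2}\,d\lambda$.

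Consequently the vanishing form bounds $\int_0^1\big|\sum_k v_kc_kr^k\big|^2r^s\,dr$ from above. This forces $\sum_kv_kc_kz^k\equiv0$, that is, $a_{k+s}\overline{b_k}=0$ for all $k\ge1$ and $s\ge0$. The pairs with $p<q$ then come from the $\overline z^n$ inputs via $Q(\overline u)=\overline{Q(u)}$. Your proposal names the goal, a definite combination, but supplies neither the right equations, nor the weights, nor the positivity argument. As it stands it does not prove the theorem.
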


Furthermore, concerning properties such as boundedness, positivity, invertibility, compactness, and Schatten classes of Toeplitz operators on the harmonic Bergman space, we refer to \cite{M}, \cite{SZ} and \cite{WZ1}.

\section{Preliminaries}\label{S2}

This section collects basic properties of the orthogonal projection $Q: L^2(\D, dA)\rightarrow L_h^2$  that will be used frequently in the proof of the main result.

\begin{lem}\label{lem:projection}
If $p$ and $q$ are nonnegative integers, then
\begin{align*}
  Q(z^p \overline{z}^q)=
  \begin{cases}
    \frac{p-q+1}{p+1}z^{p-q}, \ \ &p\geqslant q, \vspace{2mm}\\[3pt]
    \frac{q-p+1}{q+1}  \overline{z}^{q-p}, \ \ &p<q.
  \end{cases}
\end{align*}
\end{lem}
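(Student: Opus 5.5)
We have to prove Lemma \ref{lem:projection}, computing $Q(z^p\overline z^q)$.

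The plan is to expand $z^p\overline z^q$ against the orthonormal basis of $L_h^2$ from Section \ref{S1}. By rotation invariance of $dA$, the monomial $z^p\overline z^q$ transforms under $z\mapsto e^{i\theta}z$ by the character $e^{i(p-q)\theta}$. The projection $Q$ commutes with rotations, because $L_h^2$ is rotation invariant. Hence $Q(z^p\overline z^q)$ lies in the span of the basis elements with the same character. That is $z^{p-q}$ if $p\geqslant q$ (the constant $1=\overline z^0$ when $p=q$), and $\overline z^{q-p}$ if $p<q$.

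Equivalently, we can compute inner products directly. The inner products
\[
\langle z^p\overline z^q, z^m\rangle=\int_{\D} z^p\overline z^{q+m}\,dA
\quad\text{and}\quad
\langle z^p\overline z^q,\overline z^n\rangle=\int_{\D} z^{p+n}\overline z^{q}\,dA
\]
vanish unless the total exponents match. In polar coordinates, for $k\geq 0$,
\[
\int_{\D}|z|^{2k}\,dA=\frac{1}{k+1}.
\]

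\textbf{Case $p\geqslant q$.} Set $m=p-q$. Then
\[
\langle z^p\overline z^q,z^m\rangle=\int_{\D}|z|^{2p}\,dA=\frac{1}{p+1},
\]
and $\|z^m\|^2=\frac{1}{m+1}$. Therefore
\[
Q(z^p\overline z^q)=\frac{\langle z^p\overline z^q,z^m\rangle}{\|z^m\|^2}\,z^m=\frac{p-q+1}{p+1}\,z^{p-q}.
\]
This includes $m=0$, where the result is $\frac{1}{p+1}$.

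\textbf{Case $p<q$.} Set $n=q-p$. Then
\[
\langle z^p\overline z^q,\overline z^n\rangle=\int_{\D}|z|^{2q}\,dA=\frac{1}{q+1},
\]
which gives the coefficient $\frac{q-p+1}{q+1}$ in front of $\overline z^{q-p}$.

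As a cross-check, we can use the formula $Q(f)=P(f)+\overline{P(\overline f)}-P(f)(0)$ from Section \ref{S1} together with the standard Bergman fact $P(z^p\overline z^q)=\frac{p-q+1}{p+1}z^{p-q}$ for $p\geq q$, and $0$ otherwise.

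There is no real obstacle here. The only care needed is the boundary case $p=q$, to avoid double counting the constant function, which is handled by the orthonormal basis containing $1$ exactly once.
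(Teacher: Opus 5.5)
Your proof is correct, but it takes a different route from the paper. The paper computes no inner products. It substitutes $z^p\overline z^q$ into the Choe--Lee identity $Q(\varphi)=P(\varphi)+\overline{P(\overline{\varphi})}-P(\varphi)(0)$ and uses the known Bergman formula for $P(z^p\overline z^q)$. For $p>q$ only the first term survives, and for $p<q$ only the second. For $p=q$ all three terms equal $\frac{1}{p+1}$, so subtracting $P(\varphi)(0)$ is exactly what prevents the constant from being counted twice. You instead expand directly against the orthonormal basis $\{\sqrt{n+1}z^n\}_{n\geqslant1}\cup\{\sqrt{n+1}\,\overline z^n\}_{n\geqslant0}$. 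You use that $\int_{\D}z^a\overline z^b\,dA$ vanishes for $a\neq b$ and equals $\frac{1}{a+1}$ for $a=b$. Your remark that the basis contains $1$ only once plays the role of the $-P(\varphi)(0)$ term. Your argument is self-contained: it needs only the orthonormal basis, which the paper also states, and it is essentially the same computation that proves the Bergman formula. The paper's version is a short reduction that takes the $Q$--$P$ relation and the formula for $P$ as known. Your rotation-invariance paragraph is only heuristic. What actually shows that all other Fourier coefficients vanish is the orthogonality of the monomials you state afterwards, so that paragraph could be dropped.
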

\begin{proof}
From Section \ref{S1}, we recall the following relation between the orthogonal projections $Q$  and $P$:
$$Q(\varphi)=P(\varphi)+\overline{P(\overline{\varphi})}-P(\varphi)(0),$$
where  $\varphi\in L^2(\mathbb D, dA)$.
Then  combining the above  identity with the following well-known formula:
  \begin{align*}
     P(z^p\overline z^q)=
     \begin{cases}
    \frac{p-q+1}{p+1}z^{p-q},\ \ &p\geqslant q,\vspace{2mm}\\[3pt]
    0,\ \ &p<q,
    \end{cases}
  \end{align*}
we obtain the desired conclusion.
\end{proof}

Let $\Hol(\D)$ denote the space of analytic functions on $\D$, equipped with the topology of uniform convergence on compact subsets of $\D$. This topology is induced by the family of seminorms:
$$\|h\|_{K}=\sup_{z\in K}|h(z)|,\qquad h\in \Hol(\D),$$
where $K$ ranges over all compact subsets of $\D$. It is well-known that
we can extend the domain of the Bergman projection $P$ to $L^1(\mathbb D, dA)$:
$$(Pf)(z)=\int_{\D}\frac{f(w)}{(1-z\overline{w})^2} dA(w)$$
for $f\in L^1(\mathbb D, dA)$ and $z\in \mathbb D$.
Therefore, $Pf$ is analytic on $\mathbb D$ for each $f\in L^1(\mathbb D, dA)$.
Moreover, recall that $P: L^1(\D,dA)\to L_a^1(\D,dA)$ is unbounded.
Nevertheless, $P$ is a continuous map from $L^1(\mathbb D,dA)$  into $\Hol(\D)$, as stated in the following lemma.

\begin{lem}\label{Lbdd}
The Bergman projection $P$ is continuous from $L^1(\mathbb D,dA)$  into $\Hol(\D)$.
\end{lem}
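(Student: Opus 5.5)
We want to show that $P: L^1(\D,dA)\to \Hol(\D)$ is continuous, where $\Hol(\D)$ carries the topology of uniform convergence on compact sets. Since $P$ is linear and this topology is generated by the seminorms $\|\cdot\|_K$, it suffices to show the following: for every compact $K\subset\D$ there is a constant $C_K$ with
\[
\|Pf\|_K\le C_K\|f\|_{L^1(\D,dA)},\qquad f\in L^1(\D,dA).
\]
A linear map into a locally convex space is continuous precisely when each defining seminorm of the image is dominated by a continuous seminorm of the domain, so this estimate is all we need.

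To prove the estimate, fix a compact $K\subset\D$ and choose $r<1$ with $K\subset\{|z|\le r\}$. For $z\in K$ and $w\in\D$ we have $|1-z\overline w|\ge 1-|z||w|\ge 1-r$. The integral formula for $P$ recalled above then gives
\[
|(Pf)(z)|\le\int_{\D}\frac{|f(w)|}{|1-z\overline w|^2}\,dA(w)\le\frac{1}{(1-r)^2}\|f\|_{L^1}.
\]
Taking the supremum over $z\in K$ yields the estimate with $C_K=(1-r)^{-2}$.

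For completeness I will also check that $Pf$ really lies in $\Hol(\D)$, although the paper already asserts this. The kernel $(1-z\overline w)^{-2}$ is analytic in $z$, and it is bounded uniformly for $z$ in compact sets by the bound above. One can then apply Morera's theorem together with Fubini's theorem, or differentiate under the integral sign using dominated convergence.

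No step here is a real obstacle. The only point needing care is the kernel bound, which is uniform in $w\in\D$ once $z$ stays in a fixed compact subset of $\D$. That uniformity is exactly why $P$ is continuous into $\Hol(\D)$ even though it is unbounded as a map into $L^1_a$.
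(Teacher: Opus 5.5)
Your proof is correct and is essentially the paper's argument. The paper also fixes $\rho<1$ with $K\subset\{|z|\leqslant\rho\}$, bounds the kernel by $(1-\rho)^{-2}$, and obtains $\|Pf_n-Pf\|_K\leqslant(1-\rho)^{-2}\|f_n-f\|_{L^1(\mathbb D,dA)}$; the only difference is that it phrases this as sequential continuity (sufficient since $L^1$ is metric), whereas you phrase it as seminorm domination for a linear map.
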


\begin{proof}
Suppose $\{f_n\}_{n=1}^{\infty}\subset L^1(\D, dA)$ such that $\|f_n-f\|_{L^1(\D,dA)}\to 0$ as $n\rightarrow \infty$. It suffices to prove that $\lim\limits_{n\rightarrow \infty}\|Pf_n-Pf\|_K = 0$ for every compact subset $K\subset\mathbb D$.  Fix an arbitrary compact set $K\subset\mathbb D$. Then choose  $\rho\in (0, 1)$ such that $K\subset\{z\in\mathbb D:|z|\leqslant \rho\}$.
Clearly,
$$\left|\frac{1}{(1-z\overline{w})^2}\right| \leqslant  \frac{1}{(1-\rho)^2}$$
for each $z\in K$ and  $w\in\mathbb D$.
It follows that
\begin{align*}
\|Pf-Pf_n\|_{ K}&=\sup_{z\in K}|(Pf-Pf_n)(z)|\\
&=\sup_{z\in K}\left|\int_{\mathbb D}\frac{f_n(w)-f(w)}{(1-z\overline{w})^2}\,dA(w)\right|\\
&\leqslant \sup_{z\in K}\int_{\mathbb D}\frac{|f_n(w)-f(w)|}{|1-z\overline{w}|^2}\,dA(w)\\
&\leqslant \frac{1}{(1-\rho)^2}\int_{\mathbb D}|f_n(w)-f(w)|\,dA(w)\\
&=\frac{1}{(1-\rho)^2}\|f_n-f\|_{L^1(\mathbb D,dA)}.
\end{align*}
Since $\|f_n-f\|_{L^1(\mathbb D,dA)}\to 0$ as $n\rightarrow \infty$, we have $$\lim\limits_{n\rightarrow \infty}\|Pf_n-Pf\|_K= 0,$$ to get that $Pf_n\to Pf \ (n\rightarrow \infty)$ in $\Hol(\D)$. This completes the proof of the lemma.
\end{proof}
Using the relation between $Q$  and $P$, the domain of   $Q$ can be also  extended to $L^1(\mathbb D, dA)$:
\begin{align}\label{conjugate Q}
Qf=Pf+\overline{P(\overline{f})}-Pf(0),\ \ \ \   f\in L^1(\D, dA).
\end{align}
Then it follows from Lemma \ref{Lbdd} that $Q$ is continuous from $L^1(\D, dA)$ into $\Hol(\D)+\overline{\Hol(\D)}$, where $\overline{\Hol(\D)}$ denotes the complex conjugate of $\Hol(\D)$.

The following simple property of the orthogonal projection $Q$ will be used in the  proof of our main result.

\begin{lem}\label{Lconjugation}
For every $u\in L^1(\D, dA)$, we have
$$Q(\overline{u})=\overline{Q(u)}.$$ In particular, the  identity holds for $u\in L^2(\D, dA)$.
\end{lem}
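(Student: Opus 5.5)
We need to prove that $Q(\overline{u})=\overline{Q(u)}$ for all $u\in L^1(\D,dA)$. The natural route is to use the extended formula \eqref{conjugate Q} directly, which is stated for every $u\in L^1(\D,dA)$:
\[
Q(u)=P(u)+\overline{P(\overline{u})}-P(u)(0).
\]
Since $\overline{u}\in L^1(\D,dA)$ whenever $u\in L^1(\D,dA)$, the same formula applied to $\overline{u}$ gives
\[
Q(\overline{u})=P(\overline{u})+\overline{P(u)}-P(\overline{u})(0).
\]
Conjugating the first identity yields
\[
\overline{Q(u)}=\overline{P(u)}+P(\overline{u})-\overline{P(u)(0)}.
\]
Comparing the two expressions, the plan reduces to checking that the constant terms agree, namely
\[
P(\overline{u})(0)=\overline{P(u)(0)}.
\]

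This follows from the integral formula for the extended Bergman projection. At $z=0$ the kernel $(1-z\overline{w})^{-2}$ equals $1$, so for every $v\in L^1(\D,dA)$,
\[
P(v)(0)=\int_{\D}v\,dA.
\]
Applying this with $v=\overline{u}$ gives
\[
P(\overline{u})(0)=\int_{\D}\overline{u}\,dA=\overline{\int_{\D}u\,dA}=\overline{P(u)(0)}.
\]
Substituting this back, the two displayed expressions for $Q(\overline{u})$ and $\overline{Q(u)}$ coincide pointwise on $\D$.

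For the "in particular" clause, note that $\D$ has finite measure, so $L^2(\D,dA)\subset L^1(\D,dA)$. Moreover, on $L^2$ the extended formula \eqref{conjugate Q} agrees with the orthogonal projection $Q$ (via the relation recalled in Section \ref{S1}, with $P$ given by the same integral kernel on $L^2$). Hence the identity holds in that case too.

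The only step needing care is to confirm that the extension of $Q$ to $L^1$ is the one given by \eqref{conjugate Q}, with $P$ defined by the integral formula. Once that is granted, the argument is purely algebraic, and I expect no genuine obstacle.
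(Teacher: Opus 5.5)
Your proposal is correct and matches the paper's proof essentially step for step. Like the paper, you apply the extended formula \eqref{conjugate Q} to $\overline{u}$, conjugate the formula for $Q(u)$, and match the constant terms via $P(v)(0)=\int_{\D} v\,dA$. Your justification of the ``in particular'' clause, that $L^2(\D,dA)\subset L^1(\D,dA)$ and the extension agrees with the orthogonal projection on $L^2$, is also fine.
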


\begin{proof}
 From (\ref{conjugate Q}), we have that
$$Q(\overline{u})=P(\overline{u})+\overline{P(u)}-P(\overline{u})(0)$$
and
$$\overline{Q(u)}=\overline{P(u)}+P(\overline{u})-\overline{P(u)(0)}.$$
Moreover, since
$$P(\overline{u})(0)=\int_{\mathbb D}\overline{u(w)}\,dA(w)=\overline{\int_{\mathbb D}u(w)\,dA(w)}=\overline{P(u)(0)},$$
we finally obtain  that
$$Q(\overline{u})=\overline{Q(u)},\ \ \ \  u\in L^1(\D, dA).$$
This completes the proof.
\end{proof}

\section{Proof of Theorem \ref{thm:main}}\label{S3}

This section is devoted to the proof of Theorem \ref{thm:main}.

\begin{proof}[Proof of Theorem \ref{thm:main}]
Clearly, we need only to show the necessity. Suppose that $f\in L_a^2$ and $g\in L_a^2$ satisfying $$ T_fT_{\overline g}=T_{\overline g}T_f$$ on the harmonic Bergman space $L_h^2$.
Without loss of generality, we may assume that $$f(0)=g(0)=0$$ and write
$$
f(z)=\sum_{j=1}^{\infty}a_jz^j,
\qquad
g(z)=\sum_{j=1}^{\infty}b_jz^j.$$
Since $f, g\in L_a^2$ , we have that
$$
\sum_{j=1}^{\infty}\frac{|a_j|^2}{j+1}<\infty \ \  \ \ \mathrm{and}\ \ \ \
\sum_{j=1}^{\infty}\frac{|b_j|^2}{j+1}<\infty.
$$

Fix a nonnegative integer $s$ and let $n\geqslant 1$. We first compare the coefficients of $z^{n+s}$ in
$T_fT_{\overline g}z^n$ with those in $T_{\overline g}T_fz^n$.
On the one hand, since the projection $Q$ is continuous from $L^1(\mathbb D, dA)$ into $\Hol(\D)+\overline{\Hol(\D)}$, we have by Lemma \ref{lem:projection} that
\begin{equation*}\label{eq:Tgbarn}
\begin{aligned}
T_{\overline g}z^n=Q(\overline gz^n)=\sum_{k=1}^{n}\frac{n-k+1}{n+1}\overline{b}_kz^{n-k}+\sum_{k=n+1}^{\infty}
\frac{k-n+1}{k+1}\overline{b}_k\,\overline z^{\,k-n}.
\end{aligned}
\end{equation*}
Denote $$R_n=\sum_{k=n+1}^{\infty}
\frac{k-n+1}{k+1}\overline{b}_k\,\overline z^{\,k-n}=\sum_{m=1}^\infty \frac{m+1}{m+n+1}\overline{b}_{m+n}\overline{z}^m.$$ Then
\begin{align*}
  \|R_n\|_{L^2(\D, dA)}^2&=\sum_{m=1}^\infty \bigg|\frac{m+1}{m+n+1}\bigg|^2\frac{|b_{m+n}|^2}{m+1}\\
  &=\sum_{m=1}^\infty\frac{m+1}{(m+n+1)^2}|b_{m+n}|^2\\
  &\leqslant \sum_{m=1}^\infty\frac{|b_{m+n}|^2}{m+n+1}\\
  &=\sum_{k=n+1}^{\infty}\frac{|b_{k}|^2}{k+1}<\infty,
\end{align*}
which implies that $R_n\in \overline{L_a^2}.$
Using Lemma \ref{lem:projection} again, we obtain
\begin{align}\label{TfTg}
\begin{split}
T_fT_{\overline g}z^n&=Q(f T_{\overline{g}}z^n)\\
&=Q\Big(f\sum_{k=1}^{n}\frac{n-k+1}{n+1}\overline{b}_kz^{n-k}\Big)+Q(fR_n)\\
&=\sum_{k=1}^{n}\frac{n-k+1}{n+1}\overline{b}_kQ(fz^{n-k})+\sum_{k=n+1}^{\infty}\frac{k-n+1}{k+1}\overline{b}_kQ(f\overline z^{\,k-n})\\
&=\sum_{k=1}^{n}\sum_{j=1}^{\infty}\frac{n-k+1}{n+1}\overline{b}_ka_jQ(z^{n+j-k})+\sum_{k=n+1}^{\infty}\sum_{j=1}^{\infty}\frac{k-n+1}{k+1}a_j\overline{b}_kQ(z^j\overline z^{\,k-n})\\
&=\sum_{k=1}^{n}\sum_{j=1}^{\infty}\frac{n-k+1}{n+1}\overline{b}_ka_jz^{n+j-k}+\sum_{k=n+1}^{\infty}\sum_{j=1}^{\infty}\frac{k-n+1}{k+1}a_j\overline{b}_kQ(z^j\overline z^{\,k-n}),
\end{split}
\end{align}
where the third equality is due to the continuity of $Q$  from $L^1(\D, dA)$ to $\Hol(\D)+\overline{\Hol(\D)}$, and the fourth equality follows from the boundedness of $Q$ on $L^2(\D, dA)$.

Denote
$$\text{I}=\sum_{k=1}^{n}\sum_{j=1}^{\infty}\frac{n-k+1}{n+1}\overline{b}_ka_jz^{n+j-k}$$
and
$$\text{II}=\sum_{k=n+1}^{\infty}\sum_{j=1}^{\infty}\frac{k-n+1}{k+1}a_j\overline{b}_kQ(z^j\overline z^{\,k-n}).$$
For $1\leqslant k\leqslant  n$, the relevant terms in I are analytic. To extract the coefficient of $z^{n+s}$ from the first sum, we substitute  $n+j-k=n+s,$ which yields $j=k+s$. Consequently, the coefficient of  $z^{n+s}$ in I is
$$\sum_{k=1}^{n}\frac{n-k+1}{n+1}a_{k+s}\overline{b}_k.$$
For $k\geqslant n+1$, we need only to consider the analytic part of II. In fact, we obtain by Lemma \ref{lem:projection} that the analytic part of II is given by

$$\sum_{k=n+1}^{\infty}\sum_{j=k-n}^{\infty}\frac{k-n+1}{k+1}a_j\overline{b}_k\frac{j-(k-n)+1}{j+1}z^{j-(k-n)}.
$$
Thus the  coefficient of $z^{n+s}$ in II is $$\sum_{k=n+1}^{\infty}\frac{k-n+1}{k+1}\frac{n+s+1}{k+s+1}a_{k+s}\overline{b}_k.$$
As a result, the coefficient of $z^{n+s}$ in
$T_fT_{\overline g}z^n$ is
\begin{align*}\label{eq:firstcoefficient}
A_{n+s}&=\sum_{k=1}^{n}\frac{n-k+1}{n+1}
a_{k+s}\overline{b}_k
+\sum_{k=n+1}^{\infty}
\frac{(n+s+1)(k-n+1)}{(k+1)(k+s+1)}a_{k+s}\overline{b}_k.
\end{align*}
Letting $$A_1=\sum_{k=1}^{n}\frac{n-k+1}{n+1}
a_{k+s}\overline{b}_k$$ and $$A_2=\sum_{k=n+1}^{\infty}
\frac{(n+s+1)(k-n+1)}{(k+1)(k+s+1)}a_{k+s}\overline{b}_k,$$
we have  $A_{n+s}=A_1+A_2$.

Using the same argument as the one in (\ref{TfTg}),  we obtain that
\begin{align*}
T_{\overline{g}}T_{f}z^n&=Q(\overline{g} T_{f}z^n)=Q(\overline{g}fz^n)\\
&=Q\Big(\sum_{k=1}^{\infty}\overline{b}_k\overline{z}^kfz^n\Big)\\
&=\sum_{k=1}^{\infty}\overline{b}_kQ\Big(\sum_{j=1}^{\infty}a_jz^{n+j}\overline{z}^{k}\Big)\\
&=\sum_{k=1}^{\infty}\sum_{j=1}^{\infty}a_j\overline{b}_kQ(z^{n+j} \overline{z}^{k}).
\end{align*}
Therefore, the analytic part of $T_{\overline{g}}T_fz^n$ is given by
$$\sum_{k=1}^{\infty}~\sum_{j=\max\{1,k-n\}}^{\infty}\frac{n+j-k+1}{n+j+1}a_j\overline{b}_kz^{n+j-k},$$
which yields that  the coefficient of $z^{n+s}$ in $T_{\overline{ g}}T_{f}z^n$ is
\begin{equation*}\label{eq:secondcoefficient}
B_{n+s}=\sum_{k=1}^{\infty}
\frac{n+s+1}{n+k+s+1}a_{k+s}\overline{b}_k.
\end{equation*}
Using the Cauchy-Schwarz inequality, we have
\begin{align*}
  \sum_{k=1}^{\infty}\bigg|\frac{n+s+1}{n+k+s+1}a_{k+s}\overline{b}_k\bigg|&\leqslant (n+s+1)\sum_{k=1}^{\infty}\frac{|a_{k+s}\overline{b}_k|}{k+s+1}\\
  &=(n+s+1)\sum_{k=1}^{\infty}\frac{|a_{k+s}|}{\sqrt{k+s+1}}\frac{|\overline{b}_k|}{\sqrt{k+s+1}}\\
  &\leqslant (n+s+1)\sum_{k=1}^{\infty}\frac{|a_{k+s}|}{\sqrt{k+s+1}}\frac{|\overline{b}_k|}{\sqrt{k+1}}\\
  &\leqslant  (n+s+1)\bigg(\sum_{k=1}^{\infty}\frac{|a_{k+s}|^2}{k+s+1}\bigg)^{\frac{1}{2}}\bigg(\sum_{k=1}^{\infty}\frac{|\overline{b}_k|^2}{k+1}\bigg)^{\frac{1}{2}}\\
  &\leqslant (n+s+1)\bigg(\sum_{j=1}^{\infty}\frac{|a_{j}|^2}{j+1}\bigg)^{\frac{1}{2}}\bigg(\sum_{k=1}^{\infty}\frac{|b_k|^2}{k+1}\bigg)^{\frac{1}{2}}\\
  &<\infty
\end{align*}
 for $ n\geqslant 1$ and $s\geqslant 0.$ Thus the series  $\sum\limits_{k=1}^{\infty}
\frac{n+s+1}{n+k+s+1}a_{k+s}\overline{b}_k$ is  absolutely convergent.
 Denote
 $$B_1=\sum_{k=1}^n\frac{n+s+1}{n+k+s+1}a_{k+s}\overline{b}_k$$
 and
 $$B_2=\sum_{k=n+1}^{\infty}
\frac{n+s+1}{n+k+s+1}a_{k+s}\overline{b}_k.$$
 Then we have $B_{n+s}=B_1+B_2$.
The relation $T_fT_{ \overline{g}}=T_{ \overline{g}}T_f$ implies that
$A_{n+s}=B_{n+s}$ for all $n\geqslant 1$ and $s\geqslant 0$. Consequently, we have $$(A_1-B_1)+(A_2-B_2)=0.$$ For
$1\leqslant k\leqslant n$, a direct calculation yields
\begin{equation*}\label{eq:difference1}
\frac{n-k+1}{n+1}-\frac{n+s+1}{n+k+s+1}
=-\frac{k(k+s)}{(n+1)(n+k+s+1)}.
\end{equation*}
For $k>n$, we similarly have
\begin{equation*}\label{eq:difference2}
\begin{aligned}
&\frac{k-n+1}{(k+1)(k+s+1)}
-\frac1{n+k+s+1}\\
&\qquad=-\frac{n(n+s)}
{(k+1)(k+s+1)(n+k+s+1)}.
\end{aligned}
\end{equation*}
Thus,   using $(A_1-B_1)+(A_2-B_2)=0$ we  obtain
\begin{align*}
  &\sum_{k=1}^n\frac{-k(k+s)(n+s+1)}{(n+1)(n+k+s+1)(n+s+1)}a_{k+s}\overline{b}_k\\
  &\ \ +\sum_{k=n+1}^{\infty}\frac{-n(n+s)(n+s+1)}
{(k+1)(k+s+1)(n+k+s+1)}a_{k+s}\overline{b}_k=0,
\end{align*}
which is equivalent to
\begin{align}\label{eq:coefficientidentity}
\begin{split}
  &\sum_{k=1}^n\frac{k(k+s)}{(n+1)(n+k+s+1)(n+s+1)}a_{k+s}\overline{b}_k\\
  &\ \ +\sum_{k=n+1}^{\infty}\frac{n(n+s)}
{(k+1)(k+s+1)(n+k+s+1)}a_{k+s}\overline{b}_k=0
\end{split}
\end{align}
for every $n\geqslant 1$.

For fixed $s\geqslant 0$, define
\begin{equation*}\label{eq:kernel}
K_s(n,k)=
\begin{cases}
\displaystyle
\frac{k(k+s)}{(n+1)(n+s+1)(n+k+s+1)},&1\leqslant k\leqslant n,\vspace{2mm}\\[9pt]
\displaystyle
\frac{n(n+s)}{(k+1)(k+s+1)(n+k+s+1)},& k\geqslant n.
\end{cases}
\end{equation*}
Observe that $K_{s}(n, k)=K_s(k, n)$ for all positive integers $n$ and $k$. Moreover,
\eqref{eq:coefficientidentity} can be rewritten as follows:
\begin{equation*}\label{eq:kernel simply}
\sum_{k=1}^{\infty}K_s(n,k)a_{k+s}\overline{b}_k=0.
\end{equation*}
Let $c_k=a_{k+s}\overline{b}_k$, $k=1, 2, \cdots.$ Then  the above equation
 becomes
\begin{equation}\label{eq:rowsystem}
\sum_{k=1}^{\infty}K_s(n,k)c_k=0,
\qquad n\geqslant 1.
\end{equation}

 Recalling that $\sum\limits_{k=1}^{\infty}
\frac{n+s+1}{n+k+s+1}a_{k+s}\overline{b}_k$ is  absolutely convergent, we have
 $\{v_kc_k\}_{k=1}^\infty \in\ell^1$, where $$v_k=\frac1{(k+1)(k+s+1)}.$$
For $k= 1, 2, \cdots$, we define
\begin{equation*}\label{eq:qv}
q_k=k(k+s)(k+1)(k+s+1).
\end{equation*}
Then the kernel $K_s(n, k)$ in (\ref{eq:rowsystem})  can be rewritten as
\begin{equation*}\label{eq:kernelfactorization}
K_s(n,k)=
\frac{q_{\min\{n,k\}}v_nv_k}{n+k+s+1}.
\end{equation*}
It follows that
\begin{align*}
  \left|\frac{K_s(n,k)}{q_n}\right|=\frac{q_{\min\{n,k\}}}{q_n} \frac{ v_nv_k}{n+k+s+1}\leqslant v_nv_k,
\end{align*}
which gives
\begin{equation*}\label{eq:doubleabsolute}
\sum_{n,k=1}^{\infty}
\left|\frac{K_s(n,k)c_k\overline{c}_n}{q_n}\right|
\leqslant\left(\sum_{k=1}^{\infty}|v_kc_k|\right)^2<\infty.
\end{equation*}
Thus we conclude  that the double series $\sum\limits_{n,k=1}^{\infty}
\frac{K_s(n,k)c_k\overline{c}_n}{q_n}$ is absolutely convergent.
It follows  that
$$\sum_{n,k=1}^{\infty}
\frac{K_s(n,k)c_k\overline{c}_n}{q_n}=\sum_{n=1}^{\infty}\sum_{k=1}^{\infty}
\frac{K_s(n,k)c_k\overline{c}_n}{q_n}.$$
Multiplying Equation \eqref{eq:rowsystem} by $\frac{\overline{c}_n}{q_n}$ and summing over $n\geqslant 1$, we obtain
$$\sum_{n=1}^{\infty}\frac{\overline{c}_n}{q_n}\sum_{k=1}^{\infty}
K_s(n,k)c_k=0,$$
which is equivalent to
\begin{equation}\label{eq:fullquadraticzero}
\sum_{n,k=1}^{\infty}\frac{K_s(n,k)c_k\overline{c}_n}{q_n}=0,
\end{equation}
and then we have
\begin{equation}\label{eq:fullquadraticzero1}
\sum_{n,k=1}^{\infty}\frac{K_s(k,n)c_n\overline{c}_k}{q_k}=0.
\end{equation}
Since $K_s(n,k)=K_s(k,n)$ for all $n\geqslant 1$ and $k\geqslant1$,  (\ref{eq:fullquadraticzero1}) can be rewritten as
\begin{equation}\label{eq:fullquadraticzero4}
\sum_{n,k=1}^{\infty}\frac{K_s(n,k)c_k\overline{c}_n}{q_k}=0.
\end{equation}
 Adding Equations (\ref{eq:fullquadraticzero}) and (\ref{eq:fullquadraticzero4}) yields
 \begin{equation}\label{eq:fullquadraticzero2}
 \sum_{n,k=1}^\infty K_s(n,k)\left(\frac{1}{q_k}+\frac{1}{q_n}\right)c_k\overline c_n=0.
 \end{equation}
 Note that
 $$K_s(n,k)\left(\frac{1}{q_k}+\frac{1}{q_n}\right)=\frac{v_kv_n}{n+k+s+1}\left(1+\frac{q_{\min\{n,k\}}}{q_{\max\{n,k\}}}\right).$$ Thus,  Equation (\ref{eq:fullquadraticzero2}) can be rewritten as
  \begin{equation*}\label{eq:fullquadraticzero3}
 \sum_{n,k=1}^\infty \frac{v_kv_n}{n+k+s+1}\left(1+\frac{q_{\min\{n,k\}}}{q_{\max\{n,k\}}}\right)c_k\overline c_n=0.
 \end{equation*}

Next, we are going to show  that $c_k=0$ for all $k\geqslant1.$
Letting $N$ be a positive integer, we begin by analyzing the following quadratic form:
$$S_N=\sum_{k, n=1}^{N}\frac{v_kv_n}{n+k+s+1}\left(1+\frac{q_{\min\{n,k\}}}{q_{\max\{n,k\}}}\right)c_k \overline{c}_n.$$
Using
\begin{equation*}\label{eq:integralrepresentation}
\frac1{n+k+s+1}=\int_0^1r^{n+k+s}\,dr,
\end{equation*}
we obtain
\begin{equation*}\label{eq:finitegram}
\begin{aligned}
S_N&=\sum_{k, n=1}^{N}\frac{v_kv_n}{n+k+s+1}\left(1+\frac{q_{\min\{n,k\}}}{q_{\max\{n,k\}}}\right)c_k\overline c_n\\
&=\sum_{k, n=1}^{N}\frac{v_kv_n}{n+k+s+1}c_k\overline c_n+\sum_{k, n=1}^{N}\frac{v_kv_n}{n+k+s+1}\frac{q_{\min\{n,k\}}}{q_{\max\{n,k\}}}c_k\overline c_n\\
&=\sum_{n,k=1}^{N} \int_{0}^{1} v_n r^n\overline{c}_nv_kr^k c_k r^s dr+\sum_{n,k=1}^{N} \int_{0}^{1}v_nv_k r^{n+k+s}\frac{q_{\min\{n,k\}}}{q_{\max\{n,k\}}}c_k\overline c_ndr\\
&:=\text{I}_1+\text{I}_2.
\end{aligned}
\end{equation*}
For the first term $\text{I}_1$, we have
\begin{align*}
  \text{I}_1&=\int_{0}^{1} \bigg(\sum_{n=1}^{N} v_n r^n\overline{c}_n\bigg)\bigg(\sum_{k=1}^{N}v_kr^k c_k\bigg)r^s dr\\
  &=\int_{0}^{1} \overline{\bigg(\sum_{n=1}^{N} v_n r^nc_n\bigg)}\bigg(\sum_{k=1}^{N}v_kr^k c_k\bigg)r^s dr\\
  &=\int_{0}^{1} \bigg|\sum_{k=1}^{N} v_k r^kc_k\bigg|^2r^s dr\geqslant 0.
\end{align*}
For the second term $\text{I}_2$ in $S_N$, since
\begin{align*}
  \frac{q_{\min\{n,k\}}}{q_{\max\{n,k\}}}=
  \begin{cases}
    \frac{q_n}{q_k},\ \ &n\leqslant k,\vspace{2mm}\\
    \frac{q_k}{q_n},\ \ &n\geqslant k,
  \end{cases}
\end{align*}
an elementary computation gives that
$$\frac{q_{\min\{n,k\}}}{q_{\max\{n,k\}}}=e^{-|\log(q_n)-\log(q_k)|}.$$
Observing that
$$e^{-|t|}=\frac{1}{\pi}\int_{\R}\frac{e^{\mathrm{i}\lambda t}}{1+\lambda^2}d\lambda \ \ \ \ (t\in \mathbb R),$$ we have
\begin{align*}
  e^{-|\log (q_k)-\log (q_n)|}&=\frac{1}{\pi}\int_{\R}\frac{e^{\mathrm{i}t [\log (q_k)-\log (q_n)]}}{1+t^2}dt\\
  &=\frac{1}{\pi}\int_{\R}\frac{e^{\mathrm{i}t \log (q_k)}e^{-\mathrm{i}t\log (q_n)}}{1+t^2}dt.
\end{align*}
It follows that
\begin{align*}
  \text{I}_2&=\frac{1}{\pi}\sum_{n,k=1}^{N} v_nv_k c_k\overline c_n\int_{0}^{1}r^{n+k+s}\int_{\R}\frac{e^{\mathrm{i}t \log(q_k)}e^{-\mathrm{i}t\log (q_n)}}{1+t^2}dtdr\\
  &=\frac{1}{\pi}\sum_{n,k=1}^{N} v_nv_k c_k\overline c_n\int_{\R}\int_{0}^{1}r^{n+k+s}\frac{e^{\mathrm{i}t \log (q_k)}e^{-\mathrm{i}t\log (q_n)}}{1+t^2}drdt\\
  &=\frac{1}{\pi}\int_{\R}\int_{0}^{1}\sum_{n,k=1}^{N} r^{n+k+s}\frac{e^{\mathrm{i}t \log (q_k)}e^{-\mathrm{i}t\log (q_n)}}{1+t^2}v_nv_k c_k\overline c_ndrdt\\
  &=\frac{1}{\pi}\int_{\R}\frac{1}{1+t^2}\int_{0}^{1}\left|\sum_{k=1}^{N} r^ke^{\mathrm{i}t\log (q_k)}v_kc_k\right|^2r^sdrdt\geqslant 0.
\end{align*}
As  $\text{I}_1$ and $\text{I}_2$ are both nonnegative, we arrive at
\begin{align}\label{eq:finitegram}
\begin{split}
S_N=\mathrm{I}_1+\mathrm{I}_2
\geqslant \int_{0}^{1}\bigg|\sum_{k=1}^{N} v_k r^kc_k\bigg|^2r^s dr\geqslant 0.
\end{split}
\end{align}
So far, we  have established that $S_N\geqslant 0$ for all  $N\geqslant 1$.

Define $$F(z)=\sum\limits_{k=1}^{\infty}v_kc_kz^k \ \ \ \ (z\in \mathbb D),$$
where $$v_k=\frac1{(k+1)(k+s+1)}\ \ \ \ \mathrm{and}\ \ \ \ c_k=a_{k+s}\overline{b}_k, \ \ \ \ k=1,2,\cdots.$$
Since $|v_kc_kz^k|\leqslant |v_kc_k|$ for $k\geqslant 1$ on $\D$ and $\sum\limits_{k=1}^{\infty}|v_kc_k|<\infty$, $F$ is analytic on $\D$.
For each  $r\in (0, 1)$, let
$$F_N(r)=\sum\limits_{k=1}^{N}v_kc_kr^k,\ \ \ \  N=1, 2,\cdots.$$
Then we have
\[
|F(r)-F_N(r)|
\leqslant \sum_{k=N+1}^{\infty}|v_kc_kr^{k}|\leqslant \sum_{k=N+1}^{\infty}|v_kc_k|,
\]
which implies  that $\{F_N\}_{N=1}^\infty$ converges uniformly to $F$ on $(0, 1)$.

Using
\eqref{eq:finitegram}, we obtain
\begin{align*}
  0\leqslant\lim_{N\to \infty}\int_{0}^1|F_N(r)|^2r^s dr\leqslant \lim_{N\to \infty}S_N =\sum_{n,k=1}^\infty \frac{v_kv_n}{n+k+s+1}\left(1+\frac{q_{\min\{n,k\}}}{q_{\max\{n,k\}}}\right)c_k\overline c_n=0.
  \end{align*}
  It follows that
  $$\lim_{N\to \infty}\int_{0}^1|F_N(r)|^2r^s dr=0,$$
to obtain
 $$\int_{0}^1|F(r)|^2r^sdr=0$$
 by the uniform convergence.  This implies  that $F=0$  a.e. on $(0,1)$.
  Since  $F$ is continuous on $(0, 1)$,   $F$ must vanish on the whole interval $(0,1)$. However, by the isolation of zeros of analytic functions\textcolor[rgb]{1.00,0.00,0.00}{, }we obtain that
$F$ is identically zero on $\D$.
Therefore,
$$v_kc_k=0,\ \ \ \  k=1,2,\cdots.$$
Since $v_k>0$ for any $k\geqslant 1$,  we have
$$c_k=a_{k+s}\overline{b}_k=0,
\qquad k\geqslant 1   \ \ \mathrm{and} \  \   s\geqslant0,$$
which can be rewritten as
$$
a_p\overline{b}_{q}=0,
\qquad p\geqslant q\geqslant1.
$$
Here, $a_k$ and $b_k$ denote the $k$-th Taylor coefficients of $f$ and $g$, respectively.

Since \(T_f\) and \(T_{\overline{g}}\) may be unbounded on the harmonic Bergman space $L_h^2$, we cannot immediately conclude that \((T_f T_{\overline{g}})^* = T_g T_{\overline{f}}\) on  $L_h^2$. Therefore, to further obtain the relationship between the Taylor coefficients of \(f\) and \(g\), we still need to invoke Lemma \ref{Lconjugation}.

 Let $n\geqslant 1$. Then applying the commutativity relation
$$T_fT_{\overline{g}}=T_{\overline{g}}T_f$$
to the function $\overline{z}^{n}\in L_h^2$, we obtain
$$Q\left[fQ(\overline{z}^n \overline{g})\right]=Q\left[\overline{g}Q(f\overline{z}^{n})\right].$$
Taking complex conjugates on both sides and using Lemma \ref{Lconjugation},
we have
$$
Q\Big[\overline{f}\, \overline{Q(\overline{z}^{n}\overline{g})}\Big]=Q\Big[g\overline{Q(f\overline{z}^{n})}\Big].$$
Using Lemma \ref{Lconjugation} again,  we  obtain that
$$\overline{Q(\overline{z}^{n}\overline{g})} =Q(gz^n)$$
and
$$\overline{Q(f\overline{z}^n)}=Q(\overline{f}z^n),$$
which yields that
$$Q\Big[\overline{f}Q(gz^n)\Big] =Q\Big[gQ(\overline{f}z^n)\Big].$$
This gives
$$T_{\overline{f}}T_g z^n =T_gT_{\overline{f}} z^n,\ \ \ \ n\geqslant 1.$$
Now using the same technique as in the treatment of the case $T_fT_{\overline{g}}=T_{\overline{g}}T_f$, we derive that
$$b_{k+s}\overline{a}_k=0,
\qquad k\geqslant1 \  \ \mathrm{and} \  \ s\geqslant0,
$$
which is equivalent to
$$
a_k\overline{b}_{k+s}=0, \ \ \ \
k\geqslant 1 \  \ \mathrm{and} \  \ s\geqslant0.$$
 Therefore, we obtain that
$$
a_p\overline{b}_{q}=0, \ \ \ \ p\geqslant 1  \ \ \mathrm{and} \ \ q\geqslant1.$$

In order to finish the proof of the theorem, we need to discuss the following cases:

\textbf{Case 1.} For any $p, q\geqslant 1$, both $a_p$ and $b_q$ equal $0$. In this case, we have $f=g=0$ immediately.

\textbf{Case 2.} There exists $m\geqslant 1$ such that $a_m\neq 0$, or there exists $n\geqslant 1$ such that $b_n\neq 0$. In this situation, we have that either $b_i=0$ for all $i\geqslant 1$ or $a_i=0$ for all $i\geqslant 1$. Thus either $g$ or $f$ is constant.

In either case, it can be shown that at least one of $f$ or $g$
 is a constant function. This completes the
proof of Theorem \ref{thm:main}.
\end{proof}

\begin{remark}
It follows from the proof of Theorem \ref{thm:main} that the conclusion can be extended straightforwardly to the standard weighted harmonic Bergman spaces
\(L_h^2(\mathbb D, dA_\alpha)\), where the measure is given by
\[
dA_\alpha(z)=(\alpha+1)(1-|z|^2)^\alpha\,dA(z), \qquad \alpha>-1.
\]
\end{remark}
\vspace{3mm}

\subsection*{Acknowledgment}
This work was supported by the National Natural Science Foundation of China
(12371125)  and  Chongqing Natural Science Foundation (CSTB2024NSCQ-MSX0177).

\end{document}